\tikzset{labl/.style={anchor=south, rotate=270, inner sep=.5mm}}
\title{Abelian versus triangulated  quotients}
\author{Henning Krause}
\address{Fakult\"at f\"ur Mathematik\\
Universit\"at Bielefeld\\ D-33501 Bielefeld\\ Germany}
\email{hkrause@math.uni-bielefeld.de}
\theoremstyle{plain}
\newtheorem{thm}{Theorem}
\newtheorem{prop}[thm]{Proposition}
\newtheorem{lem}[thm]{Lemma}
\theoremstyle{definition}
\theoremstyle{remark}
\numberwithin{equation}{thm}
\newcommand{\Abfree}{\operatorname{Ab}}
\newcommand{\add}{\operatorname{add}}
\newcommand{\Ext}{\operatorname{Ext}}
\newcommand{\Hom}{\operatorname{Hom}}
\newcommand{\HOM}{\operatorname{\mathcal{H}\!\!\;\mathit{om}}}
\renewcommand{\mod}{\operatorname{mod}}
\newcommand{\Ab}{\mathrm{Ab}} 
\newcommand{\ac}{\mathrm{ac}} 
\newcommand{\coh}{\mathrm{coh}}
\newcommand{\ex}{\mathrm{ex}}
\newcommand{\op}{\mathrm{op}}
\newcommand{\iso}{\xrightarrow{\raisebox{-.4ex}[0ex][0ex]{$\scriptstyle{\sim}$}}}
\newcommand{\longiso}{\xrightarrow{\ \raisebox{-.4ex}[0ex][0ex]{$\scriptstyle{\sim}$}\ }}
\newcommand{\lto}{\longrightarrow}
\newcommand*{\intref}[2]{\def\tmp{#1}\ifx\tmp\empty\hyperref[#2]{\ref*{#2}}\else\hyperref[#2]{#1~\ref*{#2}}\fi}
\def\A{\mathcal A} 
\def\B{\mathcal B} 
\def\C{\mathcal C}
\def\D{\mathcal D}
\def\calS{\mathcal S} 
\def\T{\mathcal T}
\def\bfD{\mathbf D} 
\def\bfK{\mathbf K}
\def\bbF{\mathbb F}
\def\bbZ{\mathbb Z}
\newcommand{\frU}{\mathfrak{U}}
\newcommand{\frV}{\mathfrak{V}}
\def\s{\sigma}
\def\t{\tau}
\def\La{\Lambda}
\def\Si{\Sigma}
\begin{document}

\keywords{Abelian category, triangulated category, abelianisation,
  abelian hull, localisation, Grothendieck universe}

\subjclass[2020]{18E10 (primary); 18E35, 18G80 (secondary)}

\begin{abstract}
  It is shown that any localisation of triangulated categories induces
  (up to an equivalence) a localisation of abelian categories when one
  passes to their abelianisations. From this one obtains for any
  enlargement of Grothendieck universes an example of an abelian
  category and a Serre subcategory within the smaller universe such
  that the corresponding quotient does only exist within the bigger
  universe. The second part of this note provides an analogue for the
  abelian hull of an arbitrary category.
\end{abstract}

\date{\today}

\maketitle

\section{Introduction}

Localisations of categories naturally arise in the study of abelian
and triangulated categories \cite{Ga1962,Ve1997}, but the construction
of a localisation raises some set-theoretic issues. More specifically,
given a category $\C$ and a class of morphisms $\Si$ in $\C$, there is
a universal functor $\C\to\C[\Si^{-1}]$ that inverts all morphisms in
$\Si$; see \cite{GZ1967}. For the localised category $\C[\Si^{-1}]$ it
is a priori not clear that for any given pair of objects the morphisms
between them form a set (and not a proper class).  Fortunately there
are explicit criteria in the literature which ensure that there is no
reason to worry, but often this issue is ignored.

For an abelian category $\A$ and a Serre subcategory $\C$ the \emph{quotient
category} $\A/\C$ is by definition the localisation $\A[\Si^{-1}]$ with
respect to the class $\Si$ of morphisms with kernel and cokernel both
in $\C$; see \cite[III.1]{Ga1962}. In this note we present examples
of such a quotient such that for some pair of objects the morphisms
between them do not form a set. For a concise exposition it is
convenient to use the setting of Grothendieck universes.

\section{Abelian quotients}

We fix a Grothendieck universe $\frU$ which is sufficiently big to
contain the set of integers; see \cite[I.1]{Ga1962}. If not mentioned
otherwise, we assume throughout that all categories are
\emph{$\frU$-categories}, which means that for any pair of objects
$X,Y$ the set $\Hom(X,Y)$ belongs to $\frU$. Given a pair of
categories $\C,\D$ we write $\HOM(\C,\D)$ for the category of functors
$\C\to\D$, with $\Hom(F,G)$ given by the set of natural
transformations $F\to G$ for any pair of functors $F,G$. This need not
be a $\frU$-category.

\begin{center}
  \textasteriskcentered \qquad \textasteriskcentered \qquad \textasteriskcentered
\end{center}\smallskip

For a pair $\A,\B$ of abelian categories let $\HOM_\ex(\A,\B)$ denote
the full subcategory of $\HOM(\A,\B)$ consisting of the exact functors
$\A\to\B$. Given a Serre subcategory $\C\subseteq\A$, the quotient
functor $\A\to\A/\C$ can be characterised as follows.

\begin{lem}\label{le:ab}{\cite[III.1]{Ga1962}}
  For an exact functor $F\colon\A \to\A'$ between abelian categories
  and a Serre subcategory $\C\subseteq\A$, the following are
  equivalent.
  \begin{enumerate}
    \item
The functor $F$ induces for any abelian category $\B$ a fully faithful functor
\[\HOM_\ex(\A',\B)\lto\HOM_\ex(\A,\B)\]
with essential image consisting of all functors $\A\to\B$ that
annihilate $\C$.
\item The functor $F$   induces an equivalence $\A/\C\iso\A'$.\qed
\end{enumerate}  
\end{lem}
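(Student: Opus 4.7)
The plan is to reduce the lemma to the existence and universal property of Gabriel's quotient construction. By \cite[III.1]{Ga1962}, the quotient functor $Q\colon\A\to\A/\C$ exists, is exact, annihilates $\C$, and itself satisfies condition (1). Granted this, both (1) and (2) assert (in the same 2-categorical sense) that $F$ is universal among exact functors on $\A$ annihilating $\C$, so the equivalence is a formal consequence.

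For (2) $\Rightarrow$ (1), I would pick an equivalence $\bar F\colon\A/\C\iso\A'$ satisfying $\bar F\circ Q\cong F$. Precomposition with $\bar F$ gives an equivalence $\HOM_\ex(\A',\B)\iso\HOM_\ex(\A/\C,\B)$ for every abelian $\B$, which transports the universal property of $Q$ to $F$ and yields exactly (1).

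For (1) $\Rightarrow$ (2), the first observation is that $F$ itself must annihilate $\C$: applying (1) with $\B=\A'$ to $\id_{\A'}\in\HOM_\ex(\A',\A')$ shows that $F=\id_{\A'}\circ F$ lies in the essential image, hence vanishes on $\C$. The universal property of $Q$ then produces an exact functor $\bar F\colon\A/\C\to\A'$ with $\bar F\circ Q\cong F$. Symmetrically, applying (1) for $F$ with $\B=\A/\C$ to the functor $Q$ (which annihilates $\C$) produces an exact functor $\bar Q\colon\A'\to\A/\C$ with $\bar Q\circ F\cong Q$. Then $(\bar F\circ\bar Q)\circ F\cong F$ and $(\bar Q\circ\bar F)\circ Q\cong Q$, so the fully faithful parts of the two universal properties (applied with $\B=\A'$ and $\B=\A/\C$ respectively) force $\bar F\circ\bar Q\cong\id_{\A'}$ and $\bar Q\circ\bar F\cong\id_{\A/\C}$, giving the desired equivalence.

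The substantive content, namely the universal property of the Gabriel quotient, is already cited, so the only remaining work is the 2-categorical bookkeeping above; I do not anticipate any serious obstacle, and the argument is essentially a uniqueness-of-universal-objects routine once one notes that (1) itself forces $F$ to annihilate $\C$.
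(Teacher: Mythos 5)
Your argument is correct and in essence the same as the paper's, which offers no written proof at all: it treats the lemma as a direct restatement of Gabriel's characterisation of the quotient functor in \cite[III.1]{Ga1962}, exactly the universal property you invoke. Your additional steps (noting that (1) forces $F$ to annihilate $\C$, and the uniqueness-of-universal-objects bookkeeping giving $\bar F\circ\bar Q\cong\id_{\A'}$ and $\bar Q\circ\bar F\cong\id_{\A/\C}$) are precisely the routine verifications the paper leaves implicit, and they are carried out correctly.
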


\section{The abelianisation of a triangulated category}

Let $\T$ be a triangulated category.  Given any abelian category $\A$
we write $\HOM_\coh(\T,\A)$ for the full subcategory of $\HOM(\T,\A)$
consisting of the cohomological functors $\T\to\A$. Recall from
\cite[II.3]{Ve1997} that the \emph{abelianisation} $\T\to\Abfree(\T)$  is a fully
faithful cohomological functor that induces for any abelian category
$\A$ an equivalence
\[\HOM_\ex(\Abfree(\T),\A)\longiso\HOM_\coh(\T,\A).\]

For a triangulated subcategory $\calS\subseteq\T$ we write $\T/\calS$
for the corresponding \emph{quotient category}
\cite[II.2]{Ve1997}. The quotient functor has the following property.

\begin{lem}\label{le:tr}{\cite[II, Th\'eor\`eme~2.2.6]{Ve1997}}
 Let $\T$ be a triangulated category and  $\calS\subseteq\T$ a
 triangulated subcategory. The quotient functor $\T\to\T/\calS$ induces for any abelian
  category $\A$ a fully faithful functor
\[\HOM_\coh(\T/\calS,\A)\lto \HOM_\coh(\T,\A)\]
with image consisting of all functors $\T\to\A$ that annihilate $\calS$.\qed
\end{lem}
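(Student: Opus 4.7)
The plan is to invoke the universal property of the Verdier quotient $Q\colon\T\to\T/\calS$, which is a localisation of $\T$ at the class $\Si$ of morphisms whose cone lies in $\calS$. Well-definedness of the restriction functor $\HOM_\coh(\T/\calS,\A)\to\HOM_\coh(\T,\A)$ is immediate: $Q$ is a triangulated functor, so precomposition sends cohomological functors to cohomological functors, and any $F\circ Q$ annihilates $\calS$ because $Q$ does.

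For the characterisation of the image, I would start with a cohomological $H\colon\T\to\A$ annihilating $\calS$ and show that $H$ inverts every $s\in\Si$. Completing $s\colon X\to Y$ to a triangle $X\to Y\to C\to \Si X$ with $C\in\calS$ and applying $H$ produces a long exact sequence
\[H(\Si^{-1}C)\lto H(X)\xto{H(s)} H(Y)\lto H(C),\]
whose outer terms vanish since $\calS$ is closed under $\Si^{\pm 1}$. Hence $H$ inverts $\Si$, so by the universal property of the localisation $\T[\Si^{-1}]=\T/\calS$ (at the level of plain categories) it factors uniquely as $\bar H\circ Q$. It remains to see that $\bar H$ is cohomological, which uses that every distinguished triangle in $\T/\calS$ is isomorphic to the image of a distinguished triangle in $\T$; this is the step I expect to require the most care, because it rests on the precise description of the triangulation on $\T/\calS$.

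For faithfulness, observe that $Q$ is essentially surjective, so a natural transformation between functors on $\T/\calS$ is determined by its restriction along $Q$. For fullness, given $\beta\colon F Q\to G Q$ I would define $\a_Y$ for $Y\in\T/\calS$ by choosing an isomorphism $Y\iso Q X$ and setting $\a_Y$ to be the transport of $\b_X$; naturality with respect to a roof $Y\xleftarrow{Q(s)} Q X'\xto{Q(f)} Y'$ in $\T/\calS$ reduces, using that $F$ and $G$ invert $Q(s)$, to the naturality of $\b$ applied to $f$ and $s$. Well-definedness independent of the chosen lift is similarly verified by a roof argument.
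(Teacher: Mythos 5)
Your argument is correct, but note that the paper does not prove this lemma at all: it is quoted as \cite[II, Th\'eor\`eme~2.2.6]{Ve1997}, so there is no internal proof to compare with, and what you have written is essentially a reconstruction of Verdier's own argument via the calculus of fractions. The individual steps are sound: a cohomological $H$ annihilating $\calS$ inverts every $s$ with cone in $\calS$ because the rotated triangle gives exactness of $H(\Si^{-1}C)\to H(X)\to H(Y)\to H(C)$ with vanishing outer terms ($\calS$ being closed under suspension in both directions), and the descent of the cohomological property to $\bar H$ does indeed rest only on the fact that every distinguished triangle in $\T/\calS$ is isomorphic to the image of one from $\T$, which you correctly identify as the point requiring the construction of the quotient triangulation. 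Two small refinements: since the Verdier quotient is by construction the identity on objects, the factorisation is strict, $\bar H\circ Q=H$, which is what justifies the lemma's claim about the \emph{image} (not merely the essential image) of the restriction functor; and for the same reason your fullness argument simplifies, as no choice of lift $Y\cong QX$ is needed --- one defines $\a_{QX}=\b_X$ directly and checks naturality against roofs exactly as you indicate, using that $F$ and $G$, being defined on $\T/\calS$, invert $Q(s)$. With these remarks your proposal is a complete and standard proof of the cited theorem.
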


We obtain a commutative diagram
 \[
   \begin{tikzcd}
     \T\arrow[r]\arrow[d]&\T/\calS\arrow[d]\\
     \Abfree(\T)\arrow[r] &\Abfree(\T/\calS)
   \end{tikzcd}
  \]
  and claim that  the bottom horizontal functor is equivalent to a quotient
  functor.

  Let $\Abfree_\calS(\T)$ denote the Serre subcategory of
  $\Abfree(\T)$ that is generated by the image of the embedding
  $\calS\to \T\to\Abfree(\T)$.

\begin{prop}\label{pr:quot}
     Let $\T$ be a triangulated category and  $\calS\subseteq\T$ a
 triangulated subcategory. The canonical functor $\Abfree(\T)\to\Abfree(\T/\calS)$ induces an
    equivalence
   \[\Abfree(\T)/\Abfree_\calS(\T)\longiso\Abfree(\T/\calS).\] 
\end{prop}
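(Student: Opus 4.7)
The plan is to verify condition (1) of Lemma~\ref{le:ab} for the canonical functor $F\col\Abfree(\T)\to\Abfree(\T/\calS)$ with respect to the Serre subcategory $\Abfree_\calS(\T)\subseteq\Abfree(\T)$, using the universal property of the abelianisation to transfer the question to one about cohomological functors, where Lemma~\ref{le:tr} applies directly.

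Fix an abelian category $\B$. First I would assemble the commutative square
\[
\begin{tikzcd}
\HOM_\ex(\Abfree(\T/\calS),\B)\arrow[r]\arrow[d,"\sim"']&\HOM_\ex(\Abfree(\T),\B)\arrow[d,"\sim"]\\
\HOM_\coh(\T/\calS,\B)\arrow[r]&\HOM_\coh(\T,\B)
\end{tikzcd}
\]
whose vertical arrows are the equivalences supplied by the universal property of the abelianisation (recalled before Lemma~\ref{le:tr}), whose top horizontal arrow is restriction along $F$, and whose bottom arrow is restriction along $\T\to\T/\calS$. Commutativity up to natural isomorphism is immediate from the definition of $F$, which is itself obtained from the abelianisation's universal property applied to the composition $\T\to\T/\calS\to\Abfree(\T/\calS)$. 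By Lemma~\ref{le:tr} the bottom arrow is fully faithful with essential image the cohomological functors $\T\to\B$ annihilating $\calS$, so the top arrow is fully faithful with essential image the exact functors $G\col\Abfree(\T)\to\B$ whose restriction along $\T\hookrightarrow\Abfree(\T)$ annihilates $\calS$.

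The remaining step is to identify this essential image with $\{G\in\HOM_\ex(\Abfree(\T),\B)\mid G|_{\Abfree_\calS(\T)}=0\}$. One inclusion is trivial since $\calS$ maps into $\Abfree_\calS(\T)$. For the converse, observe that for any exact functor $G\col\Abfree(\T)\to\B$ the full subcategory of objects annihilated by $G$ is closed under subobjects, quotients and extensions, hence is a Serre subcategory of $\Abfree(\T)$; if it contains the image of $\calS$, then by minimality it contains $\Abfree_\calS(\T)$. With this identification, condition (1) of Lemma~\ref{le:ab} holds for $F$, and the conclusion follows.

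The only potentially delicate point is the compatibility of the two universal properties expressed in the square, but this is just naturality of the abelianisation equivalence applied to the functor $\T\to\T/\calS$; the Serre-closure argument in the last step is the place where the definition of $\Abfree_\calS(\T)$ as a \emph{generated} Serre subcategory (rather than just the image of $\calS$) is essential.
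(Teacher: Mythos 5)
Your proposal is correct and follows essentially the same route as the paper: the same commutative square of restriction functors with vertical equivalences from the abelianisation's universal property, full faithfulness of the bottom arrow via Lemma~\ref{le:tr}, and the conclusion via Lemma~\ref{le:ab}. The only difference is that you spell out the identification of the essential image with the functors annihilating $\Abfree_\calS(\T)$ (via the kernel of an exact functor being a Serre subcategory), a step the paper states without elaboration.
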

 
\begin{proof}
For any abelian category $\A$ we obtain the following commutative diagram.
 \[
   \begin{tikzcd}
     \HOM_\ex(\Abfree(\T/\calS),\A)\arrow[r]\arrow[d,"\wr"]&\HOM_\ex(\Abfree(\T),\A)\arrow[d,"\wr"]\\
     \HOM_\coh(\T/\calS,\A)\arrow[r] &     \HOM_\coh(\T,\A)
   \end{tikzcd}
  \]
  The bottom horizontal functor is fully faithful by
  Lemma~\ref{le:tr}.  Thus the top horizontal functor is fully
  faithful, and its essential image is given by the functors that
  annihilate $\Abfree_\calS(\T)$. Then the assertion follows from
  Lemma~\ref{le:ab}.
\end{proof}

\begin{center}
  \textasteriskcentered \qquad \textasteriskcentered \qquad \textasteriskcentered
\end{center}\smallskip

Now assume that there is a universe $\frV$ strictly containing $\frU$
and fix a set $I$ in $\frV\smallsetminus\frU$. Let $\bbF$ be a field
in $\frU$ and consider the polynomial algebra
$\La=\bbF\langle I\rangle$. Modules over $\La$ are nothing but
$\bbF$-vector spaces with a distinguished set of endomorphisms indexed
by the elements of $I$. Let $\bbF$ denote the trivial $\La$-module (so
the endomorphisms given by the elements of $I$ are zero) and let $\A$
denote the category of $\La$-modules $M$ that admit a submodule
$M'\subseteq M$ such that $M'$ and $M/M'$ are finite direct sums of
copies of $\bbF$.  The category $\A$ is abelian and we write
\[\bfD^{\mathrm b}(\A)=\bfK^{\mathrm
    b}(\A)/{\mathbf{Ac}}^{\mathrm b}(\A)\] for its \emph{bounded derived
category}, which is the triangulated quotient of the homotopy category
with respect to the subcategory of acyclic complexes.  Then
$\bfK^{\mathrm b}(\A)$ is a $\frU$-category, whereas
$\bfD^{\mathrm b}(\A)$ is not. For instance, there is an isomorphism
\[\Ext^1_\La(\bbF,\bbF)\cong\Hom_\bbF(\bbF[I],\bbF)\cong \bbF^I\]
where $\bbF[I]$ denotes the free $\bbF$-module with basis $I$. This
example is a variation of \cite[Example~6.A]{Fr1964} due to
Freyd.  The calculation of $\Ext^1_\La(\bbF,\bbF)$ shows that the set
of objects of $\A$ belongs to $\frV$, and therefore
$\bfD^{\mathrm b}(\A)$ is a $\frV$-category.

\begin{prop}
  The abelian category $\Abfree(\bfK^{\mathrm b}(\A))$ is a $\frU$-category, and
  there exists a Serre subcategory such that the corresponding
  quotient is not a $\frU$-category.
\end{prop}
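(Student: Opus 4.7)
The plan is to take $\T=\bfK^{\mathrm b}(\A)$ and $\calS=\Ac^{\mathrm b}(\A)$ and apply Proposition~\ref{pr:quot}; the required Serre subcategory will be $\Abfree_\calS(\T)$ and the quotient will, up to equivalence, be $\Abfree(\bfD^{\mathrm b}(\A))$, which fails to be a $\frU$-category for the same reason that $\bfD^{\mathrm b}(\A)$ does.

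First I would check that $\Abfree(\bfK^{\mathrm b}(\A))$ is a $\frU$-category. Recall from \cite[II.3]{Ve1997} that $\Abfree(\T)$ may be realised so that objects are represented by morphisms of $\T$ and morphisms are equivalence classes of suitable diagrams in $\T$. Hence, if every $\Hom$-set in $\T$ belongs to $\frU$, so does every $\Hom$-set in $\Abfree(\T)$. Since $\bfK^{\mathrm b}(\A)$ is a $\frU$-category by the preceding discussion, $\Abfree(\bfK^{\mathrm b}(\A))$ is a $\frU$-category as well.

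Next I would invoke Proposition~\ref{pr:quot} with $\T=\bfK^{\mathrm b}(\A)$ and $\calS=\Ac^{\mathrm b}(\A)$ to obtain an equivalence
\[
\Abfree(\bfK^{\mathrm b}(\A))/\Abfree_\calS(\bfK^{\mathrm b}(\A))\longiso\Abfree(\bfD^{\mathrm b}(\A)).
\]
Thus it suffices to exhibit a pair of objects in $\Abfree(\bfD^{\mathrm b}(\A))$ whose $\Hom$-set lies outside $\frU$. Since the abelianisation $\bfD^{\mathrm b}(\A)\to\Abfree(\bfD^{\mathrm b}(\A))$ is fully faithful, it is enough to find such a pair already in $\bfD^{\mathrm b}(\A)$. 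This is provided by the computation recorded in the preceding discussion: the morphism set
\[
\Hom_{\bfD^{\mathrm b}(\A)}(\bbF,\bbF[1])\;\cong\;\Ext^1_\La(\bbF,\bbF)\;\cong\;\bbF^I
\]
does not belong to $\frU$ because $I\notin\frU$.

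The only slightly delicate point is the first step, the verification that passing to the abelianisation does not enlarge $\Hom$-sets beyond $\frU$; but this is immediate from the explicit description of morphisms in $\Abfree(\T)$ as equivalence classes of spans built out of morphisms in $\T$. Taking the Serre subcategory $\Abfree_\calS(\bfK^{\mathrm b}(\A))\subseteq\Abfree(\bfK^{\mathrm b}(\A))$ then gives the desired example.
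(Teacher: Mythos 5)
Your proposal is correct and follows essentially the same route as the paper: take the Serre subcategory of $\Abfree(\bfK^{\mathrm b}(\A))$ generated by the (images of the) acyclic complexes, apply Proposition~\ref{pr:quot} to identify the quotient with $\Abfree(\bfD^{\mathrm b}(\A))$, and conclude from the $\Ext^1$-computation that this is not a $\frU$-category. Your extra remarks — that morphisms in $\Abfree(\T)$ are controlled by morphisms in $\T$, and that full faithfulness of $\bfD^{\mathrm b}(\A)\to\Abfree(\bfD^{\mathrm b}(\A))$ transfers the failure — just make explicit what the paper leaves implicit.
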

\begin{proof}
Let   $\Abfree_\ac(\bfK^{\mathrm b}(\A))$ denote the Serre subcategory
of   $\Abfree(\bfK^{\mathrm b}(\A))$ that is generated by all acyclic
complexes in $\bfK^{\mathrm b}(\A)$. Then Proposition~\ref{pr:quot}
yields an equivalence
\[\Abfree(\bfK^{\mathrm b}(\A))/\Abfree_\ac(\bfK^{\mathrm b}(\A))\longiso \Abfree(\bfD^{\mathrm b}(\A))\]
of $\frV$-categories. We have seen that $\bfD^{\mathrm b}(\A)$ is not a
$\frU$-category, and therefore $\Abfree(\bfD^{\mathrm b}(\A))$ is not a
$\frU$-category.
\end{proof}

\section{The abelian hull of an arbitrary category}

For a category $\C$ let $\add(\C)$ denote its \emph{additive hull},
which is obtained by taking formal finite direct sums of the objects of
the $\bbZ$-linearisation  $\bbZ\C$. The canonical functor
$\C\to\add(\C)$ induces for any additive category $\A$ an equivalence
\[\HOM_{\add}(\add(\C),\A)\longiso\HOM(\C,\A),\]
where $\HOM_{\add}(-,-)$ denotes the full subcategory of additive
functors.

For an additive category $\C$ let $\mod(\C)$ denote its completion
under finite colimits, which is obtained by adding formal cokernels of
the morphisms in $\C$, for instance by using the category of coherent
functors $\C^\op\to\Ab$ into the category of abelian groups
\cite{Au1966}. The canonical fully faithful functor $\C\to\mod(\C)$
induces for any additive category $\A$ with cokernels an equivalence
\[\HOM_{\mod}(\mod(\C),\A)\longiso\HOM_{\add}(\C,\A),\]
where $\HOM_{\mod}(-,-)$ denotes the full subcategory of additive
functors that are right exact.

The \emph{abelian hull} of an additive category $\C$ is by
definition \[\Abfree(\C):=\mod((\mod(\C)^\op))^\op.\] This
construction goes back to Freyd and Gruson \cite{Fr1966, Gr1975}. The
canonical inclusion $\C\to\Abfree(\C)$ induces for any abelian
category $\A$ an equivalence
\[\HOM_\ex(\Abfree(\C),\A)\longiso\HOM_{\add}(\C,\A);\]
see \cite[Proposition~12.4.1]{Kr2022}. We summarise these
constructions and obtain the following.

\begin{lem}\label{le:ab-hull}\pushQED{\qed}
For a category $\C$ and any abelian category $\A$ the canonical functor
$\C\to\Abfree(\add(\C))$ 
induces an equivalence
\[\HOM_\ex(\Abfree(\add(\C)),\A)\longiso\HOM(\C,\A).\qedhere\]
\end{lem}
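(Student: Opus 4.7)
The plan is simply to chain together the two universal properties that were just recalled, since \Abfree(\add(\C)) has been built precisely as a two-step free construction: first freely add finite direct sums to obtain an additive category, then freely abelianise.

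First, I would invoke the universal property of the additive hull: the canonical functor $\C\to\add(\C)$ induces an equivalence
\[\HOM_{\add}(\add(\C),\A)\longiso\HOM(\C,\A)\]
for any additive category $\A$. Applied to our abelian $\A$ (which is in particular additive), this handles the passage from arbitrary functors on $\C$ to additive functors on $\add(\C)$.

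Next, I would invoke the universal property of the abelian hull applied to the additive category $\add(\C)$: the canonical inclusion $\add(\C)\to\Abfree(\add(\C))$ induces an equivalence
\[\HOM_\ex(\Abfree(\add(\C)),\A)\longiso\HOM_{\add}(\add(\C),\A).\]
Composing these two equivalences, and observing that the composite canonical functor $\C\to\add(\C)\to\Abfree(\add(\C))$ is exactly the one featured in the statement, yields the claimed equivalence
\[\HOM_\ex(\Abfree(\add(\C)),\A)\longiso\HOM(\C,\A).\]

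There is no genuine obstacle here; the only thing to check is compatibility, namely that restriction along $\C\to\Abfree(\add(\C))$ agrees with first restricting along $\add(\C)\to\Abfree(\add(\C))$ and then along $\C\to\add(\C)$, which is immediate from the fact that the canonical functor is defined as the composite of these two canonical functors. The result is therefore essentially a formal consequence of the preceding two bullet-paragraphs and needs no additional argument.
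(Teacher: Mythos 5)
Your proposal is correct and is exactly the paper's argument: the lemma is stated there with an immediate \(\square\) precisely because it ``summarises'' the two universal properties of \(\C\to\add(\C)\) and \(\add(\C)\to\Abfree(\add(\C))\), which you compose in the same way. Nothing further is needed.
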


Now consider a localisation  $\C\to \C[\Si^{-1}]$.
This yields a commutative diagram
 \[
   \begin{tikzcd}
     \C\arrow{r}\arrow[d]&\C\left[\Si^{-1}\right]\arrow{d}\\
     \Abfree(\add(\C))\arrow{r} &\Abfree(\add(\C\left[\Si^{-1}\right]))
   \end{tikzcd}
  \]
  and we claim that the bottom horizontal functor is equivalent to a
  quotient functor.

\begin{prop}\label{pr:abfree}
  Let $\C$ be a category and $\Si$ a class of morphisms in $\C$. Then
  the localisation $\C\to \C[\Si^{-1}]$ induces an
  equivalence
\[\Abfree(\add(\C))/\calS\longiso \Abfree(\add(\C[\Si^{-1}])),\]
where $\calS$ denotes the Serre subcategory generated by the kernels and
cokernels of the morphisms in $\Si$, viewed as morphisms in $\Abfree(\add(\C))$.
\end{prop}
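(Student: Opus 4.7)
The plan is to mimic the proof of Proposition~\ref{pr:quot}, with Lemma~\ref{le:ab-hull} replacing the universal property of the abelianisation of a triangulated category, and the universal property of localisation of categories playing the role of Lemma~\ref{le:tr}. In order to apply Lemma~\ref{le:ab} to the functor $\Abfree(\add(\C))\to\Abfree(\add(\C[\Si^{-1}]))$ with the Serre subcategory $\calS$, I would fix an arbitrary abelian category $\A$ and test against $\HOM_\ex(-,\A)$.

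The first step is to exhibit the commutative square
\[
\begin{tikzcd}
\HOM_\ex(\Abfree(\add(\C[\Si^{-1}])),\A) \arrow[r]\arrow[d,"\wr"] & \HOM_\ex(\Abfree(\add(\C)),\A) \arrow[d,"\wr"] \\
\HOM(\C[\Si^{-1}],\A) \arrow[r] & \HOM(\C,\A)
\end{tikzcd}
\]
in which the vertical equivalences come from Lemma~\ref{le:ab-hull} and the horizontal arrows are restriction along the respective canonical functors. By the universal property of the localisation $\C\to\C[\Si^{-1}]$, the bottom arrow is fully faithful with essential image the full subcategory of functors $\C\to\A$ that send every morphism in $\Si$ to an isomorphism. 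The commutativity of the square then forces the top arrow to be fully faithful as well.

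The second step is to identify the essential image of the top arrow with those exact functors that annihilate $\calS$. If $F\colon \Abfree(\add(\C))\to\A$ is exact, then for each $\sigma\in\Si$ viewed inside $\Abfree(\add(\C))$ one has $F(\ker\sigma)=\ker F(\sigma)$ and $F(\coker\sigma)=\coker F(\sigma)$; therefore $F(\sigma)$ is an isomorphism in $\A$ if and only if $F$ kills both $\ker\sigma$ and $\coker\sigma$. Since the full subcategory of objects of $\Abfree(\add(\C))$ annihilated by an exact functor is automatically a Serre subcategory (closed under subobjects, quotients and extensions), annihilating all such kernels and cokernels is equivalent to annihilating the Serre subcategory $\calS$ they generate. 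Combining these two steps, the essential image of the top arrow is exactly $\HOM_\ex(\Abfree(\add(\C)),\A)$ restricted to functors annihilating $\calS$, so Lemma~\ref{le:ab} delivers the claimed equivalence $\Abfree(\add(\C))/\calS\iso\Abfree(\add(\C[\Si^{-1}]))$.

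The only mildly subtle point, and the one I would expect to write out with care, is the equivalence in the second step between \emph{inverting} the morphisms in $\Si$ and \emph{annihilating} the Serre subcategory $\calS$; everything else is a direct application of the universal properties already recorded in Lemmas~\ref{le:ab} and \ref{le:ab-hull} together with the universal property of $\C[\Si^{-1}]$.
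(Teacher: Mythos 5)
Your proposal is correct and follows essentially the same route as the paper: the same commutative square of functor categories, the vertical equivalences from Lemma~\ref{le:ab-hull}, full faithfulness of the bottom arrow from the universal property of $\C\to\C[\Si^{-1}]$, and the conclusion via Lemma~\ref{le:ab}. The only difference is that you spell out the identification of the essential image (inverting the morphisms in $\Si$ versus annihilating the Serre subcategory $\calS$), a step the paper leaves implicit; your argument for it is correct.
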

\begin{proof}
For any abelian category $\A$ we obtain the following commutative diagram.
 \[
   \begin{tikzcd}
     \HOM_\ex(\Abfree(\add(\C[\Si^{-1}])),\A)\arrow[r]\arrow[d,"\wr"]&\HOM_\ex(\Abfree(\add(\C)),\A)\arrow[d,"\wr"]\\
     \HOM(\C[\Si^{-1}],\A)\arrow[r] &     \HOM(\C,\A)
   \end{tikzcd}
 \]
 The vertical functors are equivalences by Lemma~\ref{le:ab-hull}.
 The bottom horizontal functor is fully faithful by the universal
 property of the localisation $\C\to\C[\Si^{-1}]$, and therefore the top
 horizontal functor is fully faithful. Thus the assertion follows from
 Lemma~\ref{le:ab}.
\end{proof}

\begin{center}
  \textasteriskcentered \qquad \textasteriskcentered \qquad \textasteriskcentered
\end{center}\smallskip

Fix again a universe $\frV$ strictly containing $\frU$ and a set $I$
in $\frV\smallsetminus\frU$. We consider the following quiver
 \[
   \begin{tikzcd}
 \Gamma\colon&
 x&y_i\arrow[l,swap,"\s_i"]\arrow[r,"\t_i"]&z\;\;\; (i\in I)
   \end{tikzcd}
 \]
 and let $\C$ denote its \emph{path category}.  Thus the objects of
 $\C$ are the vertices of $\Gamma$ and the morphisms are given by the
 paths in $\Gamma$, including the trivial path at each vertex. Let
 $\Si=\{\s_i\mid i\in I\}$. Then we have in
 $\C[\Si^{-1}]$ \[\Hom(x,z)=\{\t_i\s_i^{-1}\mid i\in I\}.\]

\begin{prop}
  The abelian category $\Abfree(\add(\C))$ is a $\frU$-category, and
  there exists a Serre subcategory such that the corresponding
  quotient is not a $\frU$-category.
\end{prop}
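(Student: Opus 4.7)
The plan is to apply Proposition~\ref{pr:abfree} with $\Si = \{\sigma_i \mid i \in I\}$. Letting $\calS \subseteq \Abfree(\add(\C))$ denote the Serre subcategory generated by the kernels and cokernels of the $\sigma_i$, the proposition yields an equivalence
\[\Abfree(\add(\C))/\calS \longiso \Abfree(\add(\C[\Si^{-1}])).\]
It therefore suffices to show that $\Abfree(\add(\C))$ is a $\frU$-category, whereas $\Abfree(\add(\C[\Si^{-1}]))$ is not.

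For the first assertion, note that $\C$ has at most one morphism between any two objects -- either an identity or one of the listed generators $\sigma_i$, $\tau_i$ -- so its hom sets are trivially $\frU$-small. Passing to $\add(\C)$ replaces hom sets by finite matrices of elements in finitely generated free abelian groups, preserving $\frU$-smallness. Finally, one verifies that the two successive applications of $\mod(-)$ that build $\Abfree$ preserve the $\frU$-category property: every coherent functor is specified by a finite presentation, and the hom space between two coherent functors $F$ and $G$ with presentations $h_A \to h_B \to F \to 0$ and $h_{A'} \to h_{B'} \to G \to 0$ embeds into $G(B) \cong \Coker(\Hom(B,A') \to \Hom(B,B'))$, which is $\frU$-small by the previous step.

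For the second assertion, the crux is that the $|I|$ morphisms $\tau_i \sigma_i^{-1} \in \Hom_{\C[\Si^{-1}]}(x,z)$ are pairwise distinct. I would verify this by exhibiting a test functor $F\colon \C \to \Mod \bbF$ that inverts every $\sigma_i$ and separates the $\tau_i$: set $F(x) = F(y_i) = \bbF$ and $F(z) = \bbF[I]$, with $F(\sigma_i) = \id_\bbF$ and $F(\tau_i)$ the inclusion of the $i$-th basis vector. By the universal property of localisation, $F$ factors uniquely through $\C[\Si^{-1}]$, and the images of the $\tau_i \sigma_i^{-1}$ are the $|I|$ distinct basis inclusions $\bbF \to \bbF[I]$. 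Hence $\Hom_{\C[\Si^{-1}]}(x,z)$ is in bijection with $I$, so its cardinality lies outside $\frU$. Since the canonical functors $\C[\Si^{-1}] \to \add(\C[\Si^{-1}]) \to \Abfree(\add(\C[\Si^{-1}]))$ are fully faithful, this failure of $\frU$-smallness is inherited by $\Abfree(\add(\C[\Si^{-1}]))$.

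The main obstacle is the bookkeeping needed to confirm that $\Abfree(\add(\C))$ is a $\frU$-category, since one must track $\frU$-smallness of hom sets through the compound construction of additive and abelian hulls; by contrast, the distinctness of the $\tau_i \sigma_i^{-1}$ is dispatched cleanly by the vector-space functor.
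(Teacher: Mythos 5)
Your proposal follows essentially the same route as the paper: both apply Proposition~\ref{pr:abfree} with $\Si=\{\s_i\mid i\in I\}$ and conclude from the fact that $\Hom(x,z)$ in $\C[\Si^{-1}]$ is indexed by $I$, hence not in $\frU$; you merely supply verifications (tracking $\frU$-smallness of hom sets through $\add$ and $\mod$, and the test functor separating the $\t_i\s_i^{-1}$) that the paper records without proof in its displayed computation of $\Hom(x,z)$. One cosmetic correction: the functor $\C[\Si^{-1}]\to\add(\C[\Si^{-1}])$ is faithful but not full, since the additive hull freely adds $\bbZ$-linear combinations of morphisms, though faithfulness is all your final step actually needs.
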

\begin{proof}
Let   $\calS$ denote the Serre subcategory
of   $\Abfree(\add(\C))$ that is generated by the kernels and
cokernels of the morphisms in $\Si$, viewed as morphisms in $\Abfree(\add(\C))$.
Then Proposition~\ref{pr:abfree}
yields an equivalence
\[\Abfree(\add(\C))/\calS\longiso \Abfree(\add(\C[\Si^{-1}]))\]
of $\frV$-categories. We have seen that $\C[\Si^{-1}]$ is not a
$\frU$-category, and therefore $ \Abfree(\add(\C[\Si^{-1}]))$ is not a
$\frU$-category.
\end{proof}

\subsection*{Acknowledgement}

This note originates from a question in my course on representation
theory of algebras in the winter semester 2024/25.  I am grateful for
ths inspiration, and also to Greg Stevenson for his comments on this
work, in particular for suggesting the quiver in the second part of
this note.  This work was supported by the Deutsche
Forschungsgemeinschaft (SFB-TRR 358/1 2023 - 491392403).

\end{document}